\documentclass[11pt]{aart}

\usepackage[letterpaper, hmargin=1in, top=1in, bottom=1in, footskip=0.6in]{geometry}

\usepackage{titlesec}
\titleformat{\section}[block]{\filcenter\normalfont\bfseries\large}{\thesection.}{.5em}{}\titlespacing*{\section}{0pt}{2\baselineskip}{1\baselineskip}
\titleformat{\subsection}[runin]{\normalfont\bfseries}{\thesubsection.}{.4em}{}[.]\titlespacing{\subsection}{0pt}{2ex plus .1ex minus .2ex}{.8em}
\titleformat{\subsubsection}[runin]{\normalfont\itshape}{\thesubsubsection.}{.3em}{}[.]\titlespacing{\subsubsection}{0pt}{1ex plus .1ex minus .2ex}{.5em}
\titleformat{\paragraph}[runin]{\normalfont\itshape}{\theparagraph.}{.3em}{}[.]\titlespacing{\paragraph}{0pt}{1ex plus .1ex minus .2ex}{.5em}

\usepackage[T1]{fontenc}
\usepackage[utf8]{inputenc}
\usepackage{lmodern}

\usepackage[labelfont=bf,font=small,labelsep=period]{caption}
\setlength{\intextsep}{3em}
\setlength{\textfloatsep}{3em}
\newcommand{{\small \input{.pdf_tex}}}[1]{{\small \input{#1.pdf_tex}}}

\usepackage{microtype}

\let\originalleft\left
\let\originalright\right
\renewcommand{\left}{\mathopen{}\mathclose\bgroup\originalleft}
\renewcommand{\right}{\aftergroup\egroup\originalright}


\usepackage{amsmath}
\usepackage{amssymb}
\usepackage{amsfonts}
\usepackage{latexsym}
\usepackage{amsthm}
\usepackage{amsxtra}
\usepackage{amscd}
\usepackage{bbm}
\usepackage{mathrsfs}
\usepackage{bm}
\usepackage{mathtools}



\usepackage{graphicx, color}

\definecolor{darkred}{rgb}{0.9,0,0.3}
\definecolor{darkblue}{rgb}{0,0.3,0.9}
\definecolor{purple}{rgb}{0.6,0,0.7}


\definecolor{vdarkred}{rgb}{0.6,0,0.2}
\definecolor{vdarkblue}{rgb}{0,0.2,0.6}
\usepackage[pdftex, colorlinks, linkcolor=vdarkblue,citecolor=vdarkred]{hyperref}

\usepackage{booktabs}
\setlength{\heavyrulewidth}{0.08em}
\setlength{\lightrulewidth}{0.05em}
\setlength{\cmidrulewidth}{0.05em}


\usepackage[nottoc,notlof,notlot]{tocbibind}
\usepackage{cite} 



\numberwithin{equation}{section}

\usepackage{enumitem}


\theoremstyle{plain} 
\newtheorem{theorem}{Theorem}[section]
\newtheorem*{theorem*}{Theorem}
\newtheorem{lemma}[theorem]{Lemma}
\newtheorem*{lemma*}{Lemma}

\newtheorem*{corollary*}{Corollary}
\newtheorem{proposition}[theorem]{Proposition}
\newtheorem*{proposition*}{Proposition}

\newtheorem*{conjecture*}{Conjecture}

\theoremstyle{definition} 

\newtheorem*{definition*}{Definition}

\newtheorem*{example*}{Example}
\newtheorem{remark}[theorem]{Remark}
\newtheorem*{remark*}{Remark}

\newtheorem*{assumption*}{Assumption}

\newtheorem*{convention*}{Convention}


\newcommand{\f}{\mathbf} 
\newcommand{\bb}{\mathbb} 
\renewcommand{\cal}{\mathcal}


\newcommand{\wt}{\widetilde}



\renewcommand{\P}{\mathbb{P}}

\newcommand{\R}{\mathbb{R}}
\newcommand{\C}{\mathbb{C}}
\newcommand{\N}{\mathbb{N}}


\newcommand{\ee}{\mathrm{e}}
\newcommand{\ii}{\mathrm{i}}
\newcommand{\dd}{\mathrm{d}}
\newcommand{\col}{\mathrel{\vcenter{\baselineskip0.75ex \lineskiplimit0pt \hbox{.}\hbox{.}}}}
\newcommand*{\deq}{\mathrel{\vcenter{\baselineskip0.5ex \lineskiplimit0pt\hbox{\scriptsize.}\hbox{\scriptsize.}}}=}

\renewcommand{\leq}{\leqslant}
\renewcommand{\geq}{\geqslant}
\renewcommand{\epsilon}{\varepsilon}



\newcommand{\floor}[1] {\lfloor #1 \rfloor}

\newcommand{\ind}[1]{\mathbbm 1_{#1}}

\newcommand{\pb}[1]{\bigl(#1\bigr)}

\newcommand{\pbb}[1]{\biggl(#1\biggr)}

\newcommand{\qB}[1]{\Bigl[#1\Bigr]}
\newcommand{\qbb}[1]{\biggl[#1\biggr]}

\newcommand{\abs}[1]{\lvert #1 \rvert}

\newcommand{\absbb}[1]{\biggl\lvert #1 \biggr\rvert}

\newcommand{\norm}[1]{\lVert #1 \rVert}


\newcommand{\eps}{\varepsilon}

\renewcommand{\Im}{\mathrm{Im}\,} 					
\renewcommand{\Re}{\mathrm{Re}\,} 					



\title{The completely delocalized region of the Erd{\H o}s-R\'enyi graph} 
\author{Johannes Alt \and Raphael Ducatez \and Antti Knowles}

\begin{document}

\maketitle

\begin{abstract}
We analyse the eigenvectors of the adjacency matrix of the Erd{\H o}s-R\'enyi graph on $N$ vertices with edge probability $\frac{d}{N}$. We determine the full region of delocalization by determining the critical values of $\frac{d}{\log N}$ down to which delocalization persists: for $\frac{d}{\log N} > \frac{1}{\log 4 - 1}$ all eigenvectors are completely delocalized, and for $\frac{d}{\log N} > 1$ all eigenvectors with eigenvalues away from the spectral edges are completely delocalized. Below these critical values, it is known \cite{ADK20, ADK21} that localized eigenvectors exist in the corresponding spectral regions.
\end{abstract}

\section{Introduction} 

Let $A$ be the adjacency matrix of the Erd\H{o}s-R\'enyi graph $\mathbb{G}(N,d/N)$, defined as the random graph on $N$ vertices where each edge of the complete graph is kept with probability $d/N$ independently of the others. The subject of this note is the delocalization of the eigenvectors of $A$ in the limit of large $N$. A commonly used measure of delocalization of a vector $\f u \in \C^N$, which we also adopt here, is the quotient
\begin{equation*}
q(\f u) \deq \frac{\norm{\f u}^2_\infty}{\norm{\f u}_2^2},
\end{equation*}
where $\norm{\f u}_p$ denotes the $\ell^p$-norm of $\f u$.
Informally, $q(\f u) \asymp 1$ corresponds to a localized vector and $q(\f u) = N^{-1 + o(1)}$ to a completely delocalized vector. We refer to \cite{ADK20} and the references therein for an extensive discussion on the question of localization versus delocalization of the eigenvectors of random graphs and random matrices in general.

Complete delocalization for all eigenvectors of $A$ was established in \cite{EKYY1} for $d \geq (\log N)^6$ and in \cite{HeKnowlesMarcozzi2018} for $d \geq C \log N$ for some large constant $C$. The scale $d \asymp \log N$ is well known to be critical for the graph $\mathbb{G}(N,d/N)$, in the sense that it is the scale at which the concentration of the degrees fails. Above this scale, $\mathbb{G}(N,d/N)$ is typically homogeneous; below this scale, $\mathbb{G}(N,d/N)$ is typically inhomogeneous and exhibits isolated vertices, hubs, and leaves. Such structures may cause the delocalization of eigenvectors to fail. The most basic instance of this failure is the well-known connectivity threshold of $\mathbb{G}(N,d/N)$ at $d = \log N$ \cite{MR125031,Bol01}: 
for any constant $\kappa > 0$, if $d \geq (1 + \kappa) \log N$ then the graph is connected with high probability, while if $d \leq (1 - \kappa) \log N$ the graph contains isolated vertices with high probability. In the latter case, we trivially have localized eigenvectors at the origin. A much more subtle localization phenomenon, associated with hubs, was uncovered in \cite{ADK20} (see also \cite{ADK19, tikhomirov2021outliers, BBK1, BBK2} for previous results on eigenvalues): defining
\begin{equation} \label{b_star}
b_* \deq \frac{1}{\log 4 - 1} \approx 2.59,
\end{equation}
if $d \leq (b_* - \kappa) \log N$ then a \emph{semilocalized} phase exists near the edge of the spectrum, characterized by  $q(\f u) \geq N^{-\gamma}$ for some $\gamma < 1$. Moreover, in \cite{ADK21} it was proved that if $d \leq (b_* - \kappa) \log N$ then the extreme\footnote{Here, and throughout this introduction, we leave aside the largest eigenvalue of $A$, which for $d \asymp \log N$ is the Perron-Frobenius eigenvalue and constitutes an outlier separated from the rest of the spectrum.} eigenvectors of $A$ are localized.

In \cite{ADK20} it was proved that on the scale $d \asymp \log N$ complete delocalization persists in the spectral region $[-2 + \kappa, -\kappa] \cup [\kappa, 2 -\kappa]$ for the matrix $A / \sqrt{d}$. This spectral region excludes precisely the two regions exhibiting localized or semilocalized eigenvectors described in the previous paragraph: the neighbourhoods  $(-\kappa,\kappa)$ and $\pm (2 - \kappa, \infty)$ of the origin and of the spectral edges, respectively. In fact, in \cite{ADK20} it was proved that complete delocalization in this spectral region persists down to scales $d \gg \sqrt{\log N}$, below which it fails throughout the spectrum.

Hence, the question of when delocalization occurs in the neighbourhoods $(-\kappa,\kappa)$ and $\pm (2 - \kappa, \infty)$ was left open. We settle it here. Writing $d = b \log N$ for some constant $b>0$, we show that complete delocalization for $A /\sqrt{d}$ holds throughout the spectrum provided that $b > b_*$ and in $[-2 + \kappa, 2 - \kappa]$ provided that $b > 1$. As explained above, this result is optimal since for any $b < b_*$ there are localized states near the spectral edges, and for any $b < 1$ there are localized states in $(-\kappa,\kappa)$. Hence, combined with \cite{ADK20}, our result gives a complete description of the delocalized spectral region of $A / \sqrt{d}$. In addition, it shows that the extreme eigenvectors of $A / \sqrt{d}$ undergo a sharp transition from completely delocalized to localized as $b$ crosses $b_*$. We refer to Figure \ref{fig:phases} below for a phase diagram summarizing our results.

\begin{figure}[!ht]
\begin{center}
{\small 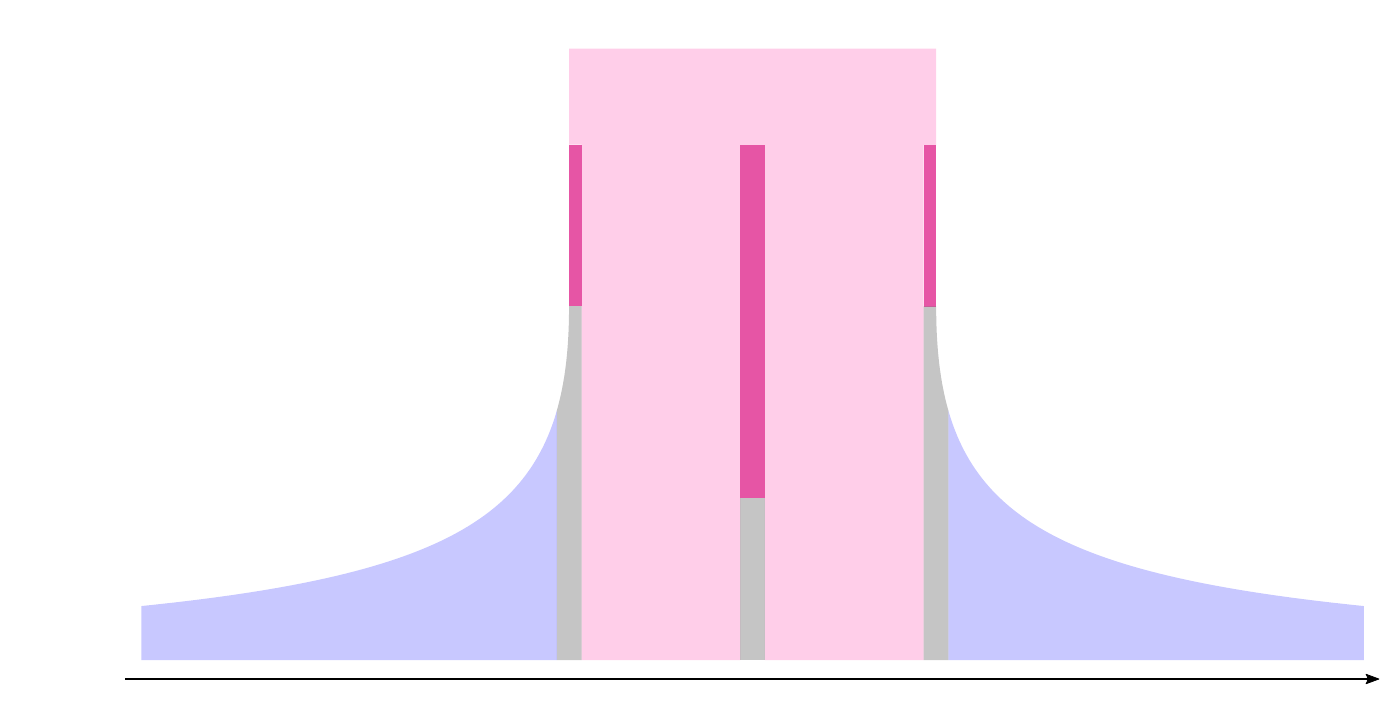}
\end{center}
\caption{
The phase diagram of the rescaled adjacency matrix $A / \sqrt{d}$ of the Erd\H{o}s-R\'enyi graph $\bb G(N,d/N)$ at criticality, where $d = b \log N$ with $b$ fixed. The horizontal axis records the location in the spectrum and the vertical axis the sparseness parameter $b$. Here, $b_*$ is defined in \eqref{b_star} and $C$ is the large constant from \cite{HeKnowlesMarcozzi2018}. The spectrum is confined to the coloured region \cite{ADK19, tikhomirov2021outliers}. This diagram summarises the results of \cite{ADK20, ADK21, HeKnowlesMarcozzi2018} and the present paper. In the red region the eigenvectors are completely delocalized; the light red region was established in \cite{ADK20, HeKnowlesMarcozzi2018} and the dark red region is established in the present paper. In the light blue region the eigenvectors are semilocalized \cite{ADK20}, and near the spectral edge (dark blue line) they are localized \cite{ADK21}. The grey regions have width $o(1)$ and have not been fully analysed yet.
\label{fig:phases}}
\end{figure}

We now state our main delocalization result.

\begin{theorem} \label{thm:delocalization_ER} 
For any constant $\kappa > 0$ the following holds with probability $1 - o(1)$.
\begin{enumerate}[label=(\roman*)] 
\item \label{item:deloc_everywhere} If $d \geq (b_* + \kappa) \log N$ then all eigenvectors $\f u$ of $A$ satisfy $q(\f u) \leq N^{-1 + \kappa}$.
\item \label{item:deloc_throughout_bulk} If $d \geq (1 + \kappa) \log N$ then all eigenvectors $\f u$ of $A$ whose eigenvalues are bounded in absolute value by $(2-\kappa) \sqrt{d}$ satisfy $q(\f u) \leq N^{-1 + \kappa}$.
\end{enumerate} 
\end{theorem}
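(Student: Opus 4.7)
The plan is to establish a local bound on the Green function $G(z) \deq (A/\sqrt d - z)^{-1}$ at the critical spectral scale $\eta = N^{-1+\gamma}$, from which complete delocalization follows via the standard identity $\abs{u_i}^2 \leq \eta\,\Im G_{ii}(\lambda + \ii\eta)$, valid for any normalized eigenvector $\f u$ of $A/\sqrt d$ with eigenvalue $\lambda$. Concretely, I aim to prove the uniform bound $\max_i \Im G_{ii}(E + \ii\eta) \leq N^{o(1)}$ for $\eta \geq N^{-1+\gamma}$ and $E$ in the relevant spectral window: a neighbourhood of the spectrum of $A/\sqrt d$ for part \ref{item:deloc_everywhere}, and $[-2+\kappa, 2-\kappa]$ for part \ref{item:deloc_throughout_bulk}. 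The bound is derived from the Schur complement
$$G_{ii}(z)^{-1} = -z - \frac{1}{d}\sum_{j,k \in \cal N_i} G^{(i)}_{jk}(z),$$
combined with concentration of the quadratic form on the right-hand side around its conditional expectation $(D_i/d)\,m_{\r{sc}}(z)$, where $D_i = \abs{\cal N_i}$, $G^{(i)}$ is the resolvent of the principal minor $A^{(i)}/\sqrt d$, and $m_{\r{sc}}$ is the Stieltjes transform of the semicircle law. The problem thus reduces to bounding the self-consistent denominator $\abs{-z - (D_i/d)\, m_{\r{sc}}(z)}$ below uniformly in $i$ on the target window.

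The equation $-z = \alpha\, m_{\r{sc}}(z)$ admits a solution $z \in [-2,2]$ only at $(\alpha, z) = (0, 0)$ and $(\alpha, z) = (2, \pm 2)$; for $\alpha > 2$, the solution migrates to $\abs z > 2$, producing the outlier and semilocalized eigenvectors analysed in \cite{ADK20, ADK21}. This identifies the two thresholds. By Chernoff's inequality, $\P[D_v \geq 2d] \asymp \exp\pb{-d(\log 4 - 1)} = N^{-b/b_*}$, and a union bound yields $\max_v D_v \leq 2(1-\delta) d$ with probability $1 - o(1)$ precisely when $b > b_* + \kappa$; the resulting strict inequality keeps $\absb{-z - (D_i/d)\, m_{\r{sc}}(z)} \gtrsim 1$ throughout and slightly beyond $[-2, 2]$, giving part \ref{item:deloc_everywhere}. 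For part \ref{item:deloc_throughout_bulk}, the weaker condition $b > 1$ suffices to exclude isolated vertices (and hence the corresponding localized eigenvectors at the origin); on the restricted window $\abs E \leq 2 - \kappa$ one has $\Im m_{\r{sc}}(z) \gtrsim 1$, so for vertices of moderate degree $\absb{\Im G_{ii}(z)^{-1}} \gtrsim D_i/d$, while for the few low-degree vertices a direct estimate of the small Schur sum yields the cruder bound $\abs{G_{ii}(z)} = O(d) = O(\log N)$, which is still ample since $\eta\, O(\log N) \leq N^{-1+\kappa}$.

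The principal technical obstacle is the concentration of the Schur quadratic form at critical sparsity $d \asymp \log N$: the rows of $A$ are $\h{0,1}$-valued with entries of expectation $d/N$, and classical Hanson--Wright-type bounds are too weak. Following the approach of \cite{ADK20}, one decouples the randomness of $\cal N_i$ from that of $G^{(i)}$ (using independence of the $i$-th row of $A$ from $A^{(i)}$) and applies a conditional large-deviation estimate tailored to sparse Bernoulli vectors. The second difficulty is bootstrapping the bound from the trivial scale $\eta \asymp 1$ down to $\eta = N^{-1+\gamma}$ via a continuity argument in $\eta$, uniform in $E$ and $i$; this is most delicate near the spectral edges in part \ref{item:deloc_everywhere}, where the margin in the denominator shrinks as the largest degrees approach $2d$ and the strict inequality $b > b_* + \kappa$ must be tracked quantitatively throughout the bootstrap.
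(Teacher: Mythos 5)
Your proposal follows the same route as the paper: a conditional local law $G_{xx}(z) \approx m_{\beta_x}(z) = -1/(z + \beta_x m(z))$ down to $\eta = N^{-1+\kappa}$ via the Schur complement and a continuity bootstrap in $\eta$, with the two thresholds $b_*$ and $1$ emerging from Chernoff/Poisson tail bounds on the normalized degrees matched against the singularity structure of the denominator $-z - \alpha m(z)$ at $\alpha = 2$ (spectral edge) and $\alpha = 0$ (origin). One small superfluity: in part (ii) your separate treatment of low-degree vertices is unnecessary, since $d \geq (1+\kappa)\log N$ already forces $\min_x D_x/d \geq \epsilon$ with probability $1-o(1)$, so the local law applies uniformly over all vertices.
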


Delocalization in the sense of $q(\f u) \leq N^{-1 + \kappa}$ has been a central topic in random matrix theory ever since the seminal work \cite{ESY2}.
The proof of Theorem \ref{thm:delocalization_ER} is based on an extension of the delocalization argument from \cite{ADK20}. There, it was shown that the spectral measure of the Green function of $A / \sqrt{d}$ at some vertex $x$ is well approximated in a certain spectral region by the spectral measure at the root of an infinite regular tree, whose root has the same degree as $x$ and all its children have degree $d$. In this paper, we establish this approximation in the full region where these spectral measures are regular. The main observation underlying our proof is that this spectral measure develops a singularity near the origin if and only if the normalized degree of $x$ (see \eqref{eq:def_alpha} below) is small, and it also develops a singularity in the interval $\pm [2,\infty)$ if and only if the normalized degree of $x$ is at least $2$. We combine this observation with elementary tail bounds on the maximal and minimal degrees of $\bb G(N,d/N)$.

The rest of this paper is devoted to the proof of Theorem \ref{thm:delocalization_ER}.
\paragraph{Notation} 
Every quantity that is not explicitly \emph{constant} depends on $N$. In statements of conditions we use $\kappa$ to denote a positive constant. We use $X = O(Y)$ and $\abs{X} \lesssim Y$ interchangeably to mean $\abs{X} \leq C Y$ for some constant $C$. We write $X \asymp Y$ if $X \lesssim Y$ and $X \gtrsim Y$. 
We abbreviate $[N] \deq \{1, 2, \dots, N\}$.

\section{Conditional delocalization for sparse matrices}

In this section, we state a version of Theorem~\ref{thm:delocalization_ER} in the more general setup 
of sparse matrices in Proposition~\ref{pro:delocalization_general} below. 
In Subsection~\ref{subsec:proof_main_result}, we then conclude Theorem~\ref{thm:delocalization_ER} 
from Proposition~\ref{pro:delocalization_general} by analysing the degree distribution of Erd{\H o}s-R\'enyi graphs.

We now introduce these sparse matrices, which generalize (appropriately scaled) adjacency matrices of Erd{\H o}s-R\'enyi graphs. 
We consider matrices $M$ of the form 
\begin{equation} \label{eq:def_M} 
M = H + f \f e \f e^*\,.
\end{equation}
Here, $0 \leq f \leq N^{\kappa/6}$, $\f e \deq N^{-1/2}(1,1,\dots,1)^*$ and $H= (H_{ij}) \in \C^{N\times N}$ is a Hermitian random matrix satisfying the following assumptions 
for some $d$ with 
\begin{equation} \label{eq:d_regime} 
\kappa \log N \leq d \leq \kappa^{-1} \log N. 
\end{equation} 
\begin{enumerate}[label = (A\arabic*)] 
	\item \label{item:assum_A1} The upper-triangular entries ($H_{ij}\col 1 \leq i \leq j\leq N$) are independent.
	\item \label{item:assum_A2} We have $\bb E H_{ij}=0$ and $ \bb E \abs{H_{ij}}^2=(1 + O(\delta_{ij}))/N$ for all $i,j$.
	\item \label{item:assum_A3} Almost surely, $\abs{H_{ij}} \leq \kappa^{-1} d^{-1/2}$ for all $i,j$.
\end{enumerate}

\noindent We assume throughout the remainder of this paper that \eqref{eq:d_regime} holds. 

The next proposition is our main result on the eigenvector delocalization of matrices $M$ 
as defined in \eqref{eq:def_M}. For its formulation, we need the following notion of high probability events.
We say that a (possibly $N$-dependent) event $\Xi$ occurs \emph{with very high probability} if 
for each constant $\nu>0$ there is a constant $C>0$ such that $\P(\Xi) \geq 1 - C N^{-\nu}$ for all sufficiently large $N$. 
Moreover, we say that an event $\Xi$ occurs \emph{with very high probability on an event $\Omega$} 
if for each constant $\nu>0$ there is a constant $C>0$ such that $\P(\Xi\cap \Omega) \geq \P(\Omega) - 
CN^{-\nu}$ for all sufficiently large $N$. 
The eigenvector delocalization of $M$ turns out to depend on the behaviour of the $\ell^2$-norms of 
the columns of $H$ which we denote by 
\begin{equation} \label{eq:def_beta_x} 
 \beta_x \deq \sum_{y \in [N]} \abs{H_{xy}}^2. 
\end{equation}

\begin{proposition} \label{pro:delocalization_general} 
Let $\kappa >0$ be a constant. 
Let $M$ be as in \eqref{eq:def_M} such that \ref{item:assum_A1} -- \ref{item:assum_A3} and \eqref{eq:d_regime} hold. 

\begin{enumerate}[label=(\roman*)] 
\item \label{item:deloc_lower_bound}
With very high probability on the event $\{ \beta_x \geq \kappa \text{ for all } x \}$, all eigenvectors $\f u$ of $M$ with eigenvalues in $[-2+ \kappa, 2 - \kappa]$ satisfy $q(\f u) \leq N^{-1 + \kappa}$.
\item \label{item:deloc_upper_bound} With very high probability on the event $\{ \beta_x \leq 2 - \kappa \text{ for all } x \}$, all eigenvectors $\f u$ of $M$ with eigenvalues 
outside $(-\kappa,\kappa)$ satisfy $q(\f u) \leq N^{-1 + \kappa}$.
\end{enumerate} 
\end{proposition}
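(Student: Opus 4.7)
The plan is to bound the diagonal entries of the resolvent $G(z) \deq (M-z)^{-1}$ on a very small spectral scale. Recall the identity $\abs{u(x)}^2 \leq \eta \, \Im G_{xx}(\lambda + \ii \eta)$ for any normalized eigenvector $\f u$ of eigenvalue $\lambda$. Taking $\eta = N^{-1 + \kappa/2}$, it suffices to establish $\Im G_{xx}(E + \ii \eta) = O(N^{\kappa/2})$ with very high probability, uniformly in $x$ and in $E$ within the relevant spectral window. The rank-one spike $f \f e \f e^*$ with $f \leq N^{\kappa/6}$ can be handled cheaply via a rank-one update of the resolvent, so we may assume $f=0$ throughout.

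Schur complement in the row/column $x$ yields
\begin{equation*}
-\frac{1}{G_{xx}(z)} = z + H_{xx} + \sum_{y,y' \neq x} H_{xy}\, G^{(x)}_{yy'}(z)\, H_{y'x},
\end{equation*}
where $G^{(x)}$ is the resolvent of $M$ with the $x$-th row/column removed. Hanson--Wright-type concentration, as in \cite{ADK20}, replaces the quadratic form by $\beta_x \, \overline m(z)$ with $\overline m(z) \deq N^{-1}\sum_y G^{(x)}_{yy}(z)$, and a local law pins $\overline m$ to the Stieltjes transform $m_{\r{sc}}$ of the semicircle. The natural limiting value of $G_{xx}$ is therefore
\begin{equation*}
 m_\alpha(z) \deq -\frac{1}{z + \alpha \, m_{\r{sc}}(z)}, \qquad \alpha = \beta_x,
\end{equation*}
which is exactly the Stieltjes transform at the root of the infinite tree whose root has normalized degree $\alpha$ and whose remaining vertices have normalized degree $1$ in the scaling of \cite{ADK20}.

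The key deterministic input is then a uniform bound $\abs{m_\alpha(E + \ii \eta)} \leq C(\kappa)$ for $\eta \in (0,1]$, in two regimes matching parts \ref{item:deloc_lower_bound} and \ref{item:deloc_upper_bound}: (i) $\alpha \geq \kappa$ and $E \in [-2+\kappa, 2-\kappa]$; (ii) $\alpha \leq 2-\kappa$ and $\abs{E} \geq \kappa$. In (i), $\Im m_{\r{sc}}(E + \ii \eta) \gtrsim \sqrt{\kappa}$ forces $\abs{z + \alpha \, m_{\r{sc}}(z)} \geq \kappa^{3/2}$, hence $\abs{m_\alpha} \lesssim \kappa^{-3/2}$. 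In (ii), for $\abs{E} \leq 2-\kappa$ the real part satisfies $E + \alpha \Re m_{\r{sc}}(E) = E(1 - \alpha/2)$, which is bounded below by $\kappa^2/2$, while for $\abs{E} \geq 2-\kappa$ the equation $E + \alpha m_{\r{sc}}(E) = 0$ has no real root as long as $\alpha < 2$ (the would-be root sits at $E = \alpha/\sqrt{\alpha-1}$, defined only for $\alpha > 2$), and the distance to zero is controlled by $\kappa$ alone. These are precisely the two regularity thresholds for $m_\alpha$ highlighted in the introduction.

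To finish, one runs the standard bootstrap on $\eta$: assuming $\max_x \abs{G_{xx}(z) - m_{\beta_x}(z)} \leq N^{-c}$ at some large initial scale $\eta_0$, one uses the a priori bound on $\abs{m_{\beta_x}}$ from the previous paragraph, together with the Ward identity and stability of the self-consistent equation, to propagate the bound down to $\eta = N^{-1+\kappa/2}$ on the event of interest. The main obstacle is Step~B, the \emph{stability} of the self-consistent equation in this sharp regime: the function $m_\alpha$ degenerates at exactly the boundaries $\alpha \downarrow 0$ (singularity at the origin) and $\alpha \uparrow 2$ (singularity emerging from $\pm 2$), and one must verify that the stability constants inherited from \cite{ADK20} remain uniformly controlled on the whole event $\{\beta_x \geq \kappa\}$ (respectively $\{\beta_x \leq 2-\kappa\}$), simultaneously for all $x$. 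Once this uniformity is in place, the bound $\Im G_{xx} = O(1)$ follows throughout the allowed spectral window, and Theorem~\ref{thm:delocalization_ER} follows from Proposition~\ref{pro:delocalization_general} in the subsequent subsection by a tail estimate on $\max_x \beta_x$ and $\min_x \beta_x$ for $\bb G(N,d/N)$.
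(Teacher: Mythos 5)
Your proposal follows essentially the same route as the paper: reduce delocalization to a local law with the inhomogeneous profile $m_{\beta_x}$, isolate the key deterministic bound $\abs{m_\alpha} \lesssim 1$ in the two regimes $\{\alpha \geq \kappa,\, E \in [-2+\kappa,2-\kappa]\}$ and $\{\alpha \leq 2-\kappa,\, \abs{E} \geq \kappa\}$, and propagate it via a bootstrap in $\Im z$. You also correctly identify the central observation of the paper (the singularity of $m_\alpha$ at the origin as $\alpha \downarrow 0$ and at $\pm 2$ as $\alpha \uparrow 2$), and your algebra for the $m_\alpha$ bound is sound, though the paper's version is simpler (it just invokes $\abs{m} \leq 1$ for $\abs{\Re z} \geq 2 - \kappa/2$ rather than parametrizing the would-be root).

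There is, however, a genuine gap in the bootstrap itself, which you flag but leave as ``Step B'' to be verified. First, you write the a priori bound at the initial scale as $\max_x \abs{G_{xx} - m_{\beta_x}} \leq N^{-c}$, but at criticality $d \asymp \log N$ the typicality condition on $\beta_x$ only concentrates at rate $(\log N)^{-1/3}$, so the local law error is necessarily only $(\log N)^{-\epsilon}$; the bootstrap must be set up with this logarithmic (not polynomial) gain per step, which is why the paper's proof of the conditional local law tracks indicators $\phi_t = \ind{\Lambda \leq (\log N)^{-1/t}}$ with distinct exponents $t = 7, 8$ and uses the Lipschitz continuity of $G$ to transport the bound between $N^{-3}$-spaced $\eta$-levels. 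Second, and more substantively, the stability of the quadratic self-consistent equation $1 + zs + s^2 = o(1)$ degenerates when the two roots $m$ and $\wt{m}$ approach each other, which happens precisely near $\abs{E} = 2$, i.e.\ on the part of $\mathbf{S}_{\mathrm{u}}$ that is new to this paper. The paper handles this by an explicit case split in the bootstrap lemma between $\abs{m - \wt{m}} > 2(\log N)^{-1/7}$ (standard stability) and $\abs{m - \wt{m}} \leq 2(\log N)^{-1/7}$ (the triangle inequality $\abs{s-m} \leq \abs{s-\wt m} + \abs{m-\wt m}$ replaces root selection), using the monotonicity of $\Im z \mapsto \abs{m - \wt m}$ to guarantee a single crossover point along the descent in $\eta$. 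Your sketch says ``one must verify that the stability constants remain uniformly controlled,'' but without this case split the argument does not close near the edge, which is exactly where part \ref{item:deloc_upper_bound} lives.
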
 

\noindent The proof of Proposition~\ref{pro:delocalization_general} shall be given directly after the statement of Lemma~\ref{lem:m_alpha} below.

\subsection{Delocalization for Erd{\H o}s-R\'enyi graphs -- proof of Theorem~\ref{thm:delocalization_ER}} \label{subsec:proof_main_result} 

Let now $A$ be the adjacency matrix of the Erd{\H o}s-R\'enyi graph $\mathbb{G}(N,d/N)$. 
For any vertex $x \in [N]$, we define its \emph{normalized degree}
\begin{equation} \label{eq:def_alpha}
\alpha_x \deq \frac{1}{d} \sum_{y \in [N]} A_{xy}.
\end{equation}

The next lemma provides tail bounds for the extreme normalized degrees.

\begin{lemma} \label{pro:degree_ER} 
Let $0 \leq d \leq \sqrt{N}$ and write $d = b \log N$.
For any small enough $\epsilon > 0$ we have the bounds
\begin{align} \label{degree_est_1}
\P(\exists x \col \alpha_x \geq 2 - \epsilon) &\leq 2 \exp \qbb{- \log N \pbb{\frac{b}{b_*} - 1 - 2 \epsilon}}\,,
\\ \label{degree_est_2}
\P(\exists x \col \alpha_x \leq \epsilon) &\leq 2 \exp \qB{- \log N \pb{b (1 + 2 \epsilon \log \epsilon) - 1}}\,.
\end{align}
\end{lemma}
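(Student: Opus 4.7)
The plan is to apply the classical Chernoff bound for binomial tails to each individual normalized degree $\alpha_x$ and then combine with a union bound over the $N$ vertices. For each $x\in[N]$, the quantity $d\alpha_x=\sum_{y\in[N]}A_{xy}$ is a sum of $N-1$ independent $\mathrm{Bernoulli}(d/N)$ variables (since $A_{xx}=0$), so $d\alpha_x\sim\mathrm{Bin}(N-1,d/N)$ with mean $\lambda\deq(N-1)d/N=d(1+O(1/N))$. Optimising the MGF estimate $\mathbb{E}e^{tX}\leq e^{\lambda(e^t-1)}$ yields the standard Cram\'er-type inequalities
\[
\mathbb{P}(X\geq \alpha\lambda)\leq e^{-\lambda\Phi(\alpha)}\quad(\alpha\geq 1),\qquad \mathbb{P}(X\leq \alpha\lambda)\leq e^{-\lambda\Phi(\alpha)}\quad(0\leq\alpha\leq 1),
\]
where $\Phi(\alpha)\deq \alpha\log\alpha-\alpha+1$ (extended by $\Phi(0)=1$) is the Cram\'er rate function.

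The specific coefficients appearing in the lemma arise from two elementary identities, $\Phi(2)=2\log 2-1=\log 4-1=1/b_*$ and $\Phi(0)=1$, which are exactly the numbers multiplying $b$ in the two claimed exponents (reflecting the physical origin of the threshold $b_*$ at the upper spectral edge and of the threshold $1$ at the origin). For the $\epsilon$-dependent corrections I use $\Phi'(t)=\log t$ to write $\Phi(2)-\Phi(2-\epsilon)=\int_{2-\epsilon}^{2}\log t\,\mathrm{d}t\leq 2\epsilon$, so that $\Phi(2-\epsilon)\geq 1/b_*-2\epsilon$; and for $\epsilon\leq \mathrm e^{-1}$ I use $-\epsilon\geq \epsilon\log\epsilon$ (equivalently $\log\epsilon\leq -1$) to get $\Phi(\epsilon)=1-\epsilon+\epsilon\log\epsilon\geq 1+2\epsilon\log\epsilon$.

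To conclude, I substitute $\alpha=(2-\epsilon)(1+O(1/N))$ and $\alpha=\epsilon(1+O(1/N))$ respectively into the Chernoff bounds, use $\lambda\Phi(\alpha)=d\Phi(\alpha)+o(1)$ uniformly in $d\leq \sqrt N$, plug in $d=b\log N$, and take a union bound over $x\in[N]$; this contributes the factor $N=e^{\log N}$ that produces the $-1$ in the exponents. The prefactor $2$ on the right-hand sides absorbs the $O(1)$ corrections coming from the binomial-to-$d$ mean adjustment and from the geometric-series overhead when summing $\mathbb{P}(X=k)$ over $k$ beyond the threshold. The argument is entirely elementary and presents no substantive obstacle; the only mild care required is the routine bookkeeping of constants in the bounds on $\Phi$ near its two critical values $0$ and $2$.
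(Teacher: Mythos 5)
Your strategy (Chernoff large-deviation bounds for the binomial tails, then a union bound over vertices) is essentially the same as the paper's: the paper invokes Bennett's inequality for \eqref{degree_est_1} and a Poisson reduction with a direct tail-sum estimate for \eqref{degree_est_2}, and both routes produce the same Cram\'er rate function $\Phi(\alpha)=\alpha\log\alpha-\alpha+1$. Your identification of $\Phi(2)=\log 4-1=1/b_*$ and $\Phi(0)=1$, and your local bounds $\Phi(2-\epsilon)\geq 1/b_*-2\epsilon$ and $\Phi(\epsilon)\geq 1+2\epsilon\log\epsilon$ for $\epsilon\leq\ee^{-1}$, are all correct, and the bookkeeping for \eqref{degree_est_2} closes exactly as claimed.

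There is, however, a gap in the conclusion of \eqref{degree_est_1}. From $\P(\alpha_x\geq 2-\epsilon)\lesssim \ee^{-d\Phi(2-\epsilon)}\leq \ee^{-d(1/b_*-2\epsilon)}$ and a union bound you obtain
\[
\P\pb{\exists\, x:\alpha_x\geq 2-\epsilon}\ \lesssim\ \exp\qB{\log N-d\pb{1/b_*-2\epsilon}}\ =\ \exp\qB{-\log N\pb{\tfrac{b}{b_*}-1-2\epsilon\, b}},
\]
since $d=b\log N$. The $\epsilon$-correction you actually produce is $2\epsilon b$, whereas the lemma as stated has $2\epsilon$. These agree only when $b=1$; for $b>1$ your bound is strictly weaker, and your write-up never addresses how the extra factor of $b$ disappears. (For \eqref{degree_est_2} the stated correction $2b\epsilon\log\epsilon$ \emph{does} carry the factor $b$, which is why that estimate closes without difficulty.) Since Chernoff/Bennett is essentially tight for binomial tails, the coefficient $2\epsilon$ without the $b$ is not reachable by this argument when $b$ is large; the form with $2\epsilon b$ is what Bennett genuinely produces and what the paper's applications use (there $b$ ranges over a compact interval $[\kappa,\kappa^{-1}]$, so either form gives $o(1)$). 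You should either restrict to $b\lesssim 1$, absorb the $b$ by enlarging the constant in front of $\epsilon$ with an explicit justification for the range of $b$ under consideration, or simply state the bound $\exp[-\log N(b/b_*-1-2\epsilon b)]$ that your argument actually delivers. As written, the proposal asserts \eqref{degree_est_1} but proves something weaker for $b>1$.
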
 

We defer the proof of Lemma~\ref{pro:degree_ER} to the end of this section and 
first combine it with Proposition~\ref{pro:delocalization_general} to deduce Theorem~\ref{thm:delocalization_ER}. 
Proposition~\ref{pro:delocalization_general} is applicable as 
 $M = d^{-1/2} A$ satisfies \eqref{eq:def_M} with $f = \sqrt{d} \lesssim (\log N)^{1/2} \leq N^{\kappa/6}$. 
Moreover, we have 
\begin{equation} \label{eq:relation_beta_alpha} 
 \beta_x = \alpha_x + O\bigg(\frac{d + \log N}{N} \bigg) 
\end{equation} 
with very high probability (see \cite[Remark~4.3]{ADK20} for details). 

\begin{proof}[Proof of Theorem~\ref{thm:delocalization_ER}] 
For $d \geq C \log N$, Theorem~\ref{thm:delocalization_ER} has been proved in \cite{HeKnowlesMarcozzi2018}. 
Hence, we restrict to the regime \eqref{eq:d_regime} in the remainder of this proof. 

We start with the proof of \ref{item:deloc_throughout_bulk}. From \eqref{degree_est_2}, \eqref{eq:relation_beta_alpha} and the lower bound on $d$ in \ref{item:deloc_throughout_bulk},
 we conclude that the event $\{ \beta_x \geq \kappa \text{ for all }x \}$ occurs with probability $1 - o(1)$. Therefore, \ref{item:deloc_throughout_bulk} follows from Proposition~\ref{pro:delocalization_general} 
\ref{item:deloc_lower_bound} with the choice $M = d^{-1/2} A$. 

If the lower bound on $d$ from \ref{item:deloc_everywhere} holds then 
\eqref{degree_est_1}, \eqref{degree_est_2} and \eqref{eq:relation_beta_alpha} yield that 
$\{ \kappa \leq \beta_x \leq 2- \kappa \text{ for all } x\}$ occurs with probability $1 - o(1)$. 
Hence, \ref{item:deloc_everywhere} is a consequence of 
Proposition~\ref{pro:delocalization_general}. 
\end{proof}

\begin{proof}[Proof of Lemma~\ref{pro:degree_ER}] 
The estimate \eqref{degree_est_1} is an easy consequence of Bennett's inequality (see e.g. \cite[Section 2.7]{BLM13})
and a union bound. 
For the proof of \eqref{degree_est_2}, by a union bound and standard Poisson approximation results (see e.g.\ \cite[Lemmas D.1, D.3, D.4]{ADK21}), it suffices to prove that, if $X$ denotes a Poisson random variable with expectation $d$, then $N \P(X \leq \epsilon d)$ is bounded by the right-hand side of \eqref{degree_est_2} multiplied by $3/4$. 
Indeed, for $\epsilon$ small enough, we have 
\begin{equation*}
\P(X \leq \epsilon d) = \sum_{k  \leq \floor{\epsilon d}} \frac{d^k}{k!} \, \dd^{-d} \leq \frac{4}{3} \frac{d^{\floor{\epsilon d}}}{\floor{\epsilon d}!} \ee^{-d} \leq \frac{3}{2} \ee^{-d + \epsilon d (1 - \log \epsilon)} \leq \frac{3}{2} \exp \qB{- b (1 + 2 \epsilon \log \epsilon) \log N }\,,
\end{equation*}
where in the third step we used Stirling's approximation.
\end{proof}

\section{Conditional local law for sparse matrices and proof of Proposition~\ref{pro:delocalization_general}}

We now introduce the notation required for the main result of this section, Theorem~\ref{thm:local_law} below. 
We start with the indicator functions 
\[ \psi_\mathrm{l} \deq \ind{\beta_x \geq \kappa \text{ for all }x}, \qquad 
\psi_\mathrm{u} \deq \ind{\beta_x \leq 2 - \kappa \text{ for all }x}, 
\] 
which impose lower and upper bounds on all $\beta_x$, respectively. 
Moreover, we define the associated spectral domains 
\[ 
\mathbf{S}_\mathrm{l} \deq [-2 + \kappa, 2-\kappa] \times [N^{-1+ \kappa}, 1],  
\qquad \mathbf{S}_\mathrm{u} \deq (\R \setminus (-\kappa, \kappa))\times [N^{-1+ \kappa}, 1].  \] 
For $z \in \C$ with $\Im z >0$ and $\alpha \geq 0$, we introduce 
\begin{equation} \label{eq:def_m_alpha} 
 m_\alpha(z) \deq - \frac{1}{z + \alpha m(z)}, \qquad m(z) \deq \frac{-z + \sqrt{z^2 - 4}}{2},
\end{equation}
where the square root is chosen so that $m$ has a branch cut on $[-2,2]$.
The function $m$ is the Stieltjes transform of the semicircle law, and $m_\alpha$ is 
the Stieltjes transform of an explicit probability measure on $\R$ (see \cite[eq.~(4.11), (4.12) and the 
surrounding explanations]{ADK20}).  

Finally, we denote the resolvent of $M$ by $G(z) \deq (M-z)^{-1}$ with entries $G_{xy}(z)$. 

\begin{theorem}[Conditional local law] \label{thm:local_law} 
Let $M$ be as in \eqref{eq:def_M} such that \ref{item:assum_A1} -- \ref{item:assum_A3} and \eqref{eq:d_regime} hold. 
Let $\# \in \{ \mathrm{l}, \mathrm{u} \}$. 
Then we have 
\[ \psi_\# \max_{x,y \in [N]} \abs{G_{xy}(z) - \delta_{xy} m_{\beta_x}(z)} \lesssim (\log N)^{-1/7}, 
\qquad \psi_\# \absbb{\frac{1}{N} \mathrm{Tr} \, G(z) - m(z)} \lesssim (\log N)^{-1/7} \] 
for all $z \in \mathbf{S}_\#$ with very high probability. 
\end{theorem}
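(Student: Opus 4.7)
The proof follows the classical strategy for local laws of sparse random matrices, extending the argument of \cite{ADK20} from the restricted spectral region considered there to the full domains $\mathbf{S}_\mathrm{l}$ and $\mathbf{S}_\mathrm{u}$. The plan is to (i) derive an approximate self-consistent equation via a Schur complement expansion, (ii) verify stability of that equation on $\mathbf{S}_\#$ using the properties of $m_\alpha$ from Lemma~\ref{lem:m_alpha}, and (iii) propagate the bounds from $\Im z \asymp 1$ down to $\Im z \asymp N^{-1+\kappa}$ by a standard continuity/bootstrap argument.

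Concretely, for each $x$ the Schur complement formula gives $G_{xx}^{-1} = M_{xx} - z - \sum_{y,y'} H_{xy} G^{(x)}_{yy'} H_{xy'}$ plus a small rank-one correction coming from $f\f e\f e^*$, where $G^{(x)}$ is the resolvent of $M$ with row and column $x$ removed. Using the independence of row $x$ of $H$ from $G^{(x)}$, the diagonal part splits as
\[
\sum_{y} \abs{H_{xy}}^2 G^{(x)}_{yy} = \beta_x \cdot \frac{1}{N}\sum_y G^{(x)}_{yy} + \mathcal{E}_x,
\]
where $\mathcal{E}_x$ is a sum of mean-zero independent terms with variance $O(d^{-1})$ by \ref{item:assum_A3}, hence controllable to order $(\log N)^{1/2} d^{-1/2}$ by Bernstein's inequality. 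The off-diagonal quadratic form $\sum_{y\neq y'} \overline{H_{xy}} G^{(x)}_{yy'} H_{xy'}$ is handled by a standard large-deviation bound for quadratic forms, whose value is controlled via the Ward identity applied to $G^{(x)}$. The rank-one perturbation $f\f e\f e^*$ contributes only to the vector $\f e^*G\f e$, which differs from $N^{-1}\tr G$ by off-diagonal terms controlled by Ward. Setting $s \deq N^{-1}\tr G$, one arrives at $G_{xx} = m_{\beta_x}(z) + \varepsilon_x$ with $\varepsilon_x$ small, conditional on $s \approx m(z)$. Averaging over $x$ and using that $\frac{1}{N}\sum_x \beta_x = \frac{1}{N}\sum_{x,y}\abs{H_{xy}}^2 = 1 + O((\log N/N)^{1/2})$ with very high probability by \ref{item:assum_A2} and Bernstein, yields a closed approximate equation for $s$ whose deterministic solution is $m(z)$.

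The heart of the argument is the stability analysis of this self-consistent equation on $\mathbf{S}_\#$. The functions $m_\alpha(z) = -1/(z + \alpha m(z))$ develop singularities precisely at the locus where $z + \alpha m(z) = 0$, i.e.\ near $z = 0$ when $\alpha \downarrow 0$ and near $|z| = 2$ when $\alpha \uparrow 2$. The indicator $\psi_\mathrm{l}$ forces $\beta_x \geq \kappa$, which keeps us away from the first singularity and is paired with the bulk domain $\mathbf{S}_\mathrm{l}$; the indicator $\psi_\mathrm{u}$ forces $\beta_x \leq 2 - \kappa$, which keeps us away from the second singularity and is paired with $\mathbf{S}_\mathrm{u}$ that excludes $(-\kappa,\kappa)$. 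Under these pairings, Lemma~\ref{lem:m_alpha} should furnish uniform bounds $\abs{m_{\beta_x}(z)} \lesssim 1$ and a uniform lower bound on the stability constant, allowing one to invert the linearisation of the self-consistent equation with a bound of order one. A standard monotone continuity argument in $\Im z$, starting from the trivial bound at $\Im z = 1$ and iterating down to $\Im z = N^{-1+\kappa}$, then upgrades the weak \textit{a priori} control into the asserted bound at rate $(\log N)^{-1/7}$. Off-diagonal entries $G_{xy}$ with $x\neq y$ are handled by a second Schur-type identity, expressing $G_{xy}$ as a product of $G_{xx}$ and $G_{yy}^{(x)}$ times a random vector whose norm is bounded by the large-deviation estimate above.

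The main obstacle is the stability of the self-consistent equation on the full domains $\mathbf{S}_\#$, and in particular making the constants uniform in $x$ despite the fact that each $\beta_x$ may be close to (but bounded away from) the critical value determining the relevant singularity. The delicate point is that, unlike in the regime of \cite{ADK20} where a safe distance was always available, here one must quantify exactly how $\abs{z + \beta_x m(z)}$ is bounded below on the boundary of $\mathbf{S}_\#$ intersected with $\{\beta_x\}$ in the permitted range. The specific exponent $(\log N)^{-1/7}$ reflects the balance between the raw concentration error of order $d^{-1/2} \asymp (\log N)^{-1/2}$ and losses incurred both in the stability constant (which degenerates near the boundary of $\mathbf{S}_\#$) and in the bootstrap, with the $-1/7$ leaving sufficient margin to absorb all polynomial factors in $\log N$ uniformly in $z$ and in the realization of $\{\beta_x\}$.
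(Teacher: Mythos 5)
The high-level skeleton you describe (Schur complement $\to$ self-consistent equation $\to$ stability via Lemma~\ref{lem:m_alpha} $\to$ continuity bootstrap from $\Im z = 1$ down) matches the paper's outline, and your observations about the pairing of $\psi_\mathrm{l}$ with $\mathbf{S}_\mathrm{l}$ and $\psi_\mathrm{u}$ with $\mathbf{S}_\mathrm{u}$ as the two ways to avoid the singularities of $m_\alpha$ are exactly right. But there is a genuine gap in the concentration step at the critical density $d \asymp \log N$, and it is precisely the gap the paper's argument is built to close.

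You claim the fluctuation $\mathcal{E}_x = \sum_y(\abs{H_{xy}}^2 - 1/N)G^{(x)}_{yy}$ is ``controllable to order $(\log N)^{1/2}d^{-1/2}$ by Bernstein.'' With $d \asymp \log N$ this is $O(1)$, not $o(1)$. More sharply: a very-high-probability bound requires the failure probability per vertex to be $\ll N^{-1}$ (to survive a union bound over $N$ vertices), and Bernstein with variance $\asymp d^{-1}$ and entries bounded by $\kappa^{-1}d^{-1/2}$ gives $\P(\abs{\mathcal{E}_x} > t) \lesssim \exp(-c\min(t^2 d, t\sqrt{d}))$, so very high probability forces $t \gtrsim \sqrt{(\log N)/d} \asymp 1$. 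Thus one cannot conclude $G_{xx} = m_{\beta_x} + o(1)$ for \emph{all} $x$ simultaneously; a fluctuating fraction of vertices will have $O(1)$ errors. The paper's remedy is the typical/atypical vertex decomposition of \eqref{eq:def_typical}: the self-consistent equation (Lemma~\ref{lem:self_consistent}) holds only on the typical set $\cal T$, the quantity $s$ in \eqref{eq:def_s} is averaged \emph{only over $\cal T$} (not $N^{-1}\tr G$ as in your proposal), and the atypical vertices are treated by a separate stability argument via \eqref{eq:expansion_atypical_vertex}, exploiting $\abs{m_{\beta_x}}\lesssim 1$ and $\abs{-z-\beta_x m}\gtrsim 1$ on the respective domain. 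Without this decomposition the argument does not close.

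A secondary point you omit: near the spectral edges the two roots $m$ and $\wt m$ of \eqref{eq:self_consistent_m} are nearly degenerate, so the standard stability estimate \eqref{eq:stability_conclusion} gives only $\min\{\abs{s-m},\abs{s-\wt m}\}$ small, and selecting the correct branch requires the case distinction $\abs{m-\wt m}\gtrless 2(\log N)^{-1/7}$ in Lemma~\ref{lem:bootstrapping}, which is what forces the loss from exponent $-1/6$ to $-1/7$. Your heuristic explanation for the $-1/7$ exponent (``balance between raw concentration error and losses in the stability constant'') is not what actually determines it.
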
 

The proof of Theorem~\ref{thm:local_law} is given at the end of Subsection~\ref{sec:proof_local_law}. 
The next lemma collects a few basic properties of $m_\alpha$. 

\begin{lemma}[Properties of $m_\alpha$]  \label{lem:m_alpha} 
Let $\kappa >0$ be a constant. 
If $\alpha \geq \kappa$ and $z \in \mathbf{S}_{\mathrm{l}}$, or $\alpha \leq 2-\kappa$ and $z \in \mathbf{S}_{\mathrm{u}}$, then
\begin{align} 
\abs{m_{\alpha}(z)} & \lesssim 1, \label{eq:upper_bound_m_alpha} \\ 
\abs{m_{\alpha}(z) - m(z)} & \lesssim \abs{\alpha - 1}. \label{eq:continuity_m_alpha_in_alpha}
\end{align} 
\end{lemma}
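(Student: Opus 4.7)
The plan is to reduce both estimates to the single lower bound $\abs{z+\alpha m(z)} \gtrsim 1$ and then prove that bound by exploiting the imaginary part in one regime and the real part in the other. To reduce \eqref{eq:continuity_m_alpha_in_alpha} to \eqref{eq:upper_bound_m_alpha}, I would first record the algebraic identity
\[
m_\alpha(z) - m(z) \;=\; (\alpha - 1)\, m(z)^2\, m_\alpha(z),
\]
which follows by rewriting the defining equation $m^2 + zm + 1 = 0$ of the semicircle Stieltjes transform as $z + m = -1/m$ and comparing with the definition $m_\alpha = -1/(z + \alpha m)$. Combined with the classical bound $\abs{m(z)} \leq 1$ valid for $\Im z > 0$, this identity immediately reduces \eqref{eq:continuity_m_alpha_in_alpha} to \eqref{eq:upper_bound_m_alpha}.

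For the lower bound on $\abs{z + \alpha m(z)}$ in the first regime ($\alpha \geq \kappa$, $z \in \mathbf{S}_{\mathrm{l}}$), I would simply take imaginary parts,
\[
\abs{z + \alpha m(z)} \;\geq\; \Im z + \alpha\, \Im m(z) \;\geq\; \alpha\, \Im m(z) \;\gtrsim\; \kappa,
\]
using the standard bulk estimate $\Im m(E+i\eta) \gtrsim 1$ for $\abs{E} \leq 2 - \kappa$ and $\eta \in (0, 1]$, which follows at once from the Poisson-integral representation of $\Im m$ against the semicircle density. In the second regime ($\alpha \leq 2 - \kappa$, $z \in \mathbf{S}_{\mathrm{u}}$) the imaginary part alone can be as small as $N^{-1+\kappa}$ when $\alpha$ is close to $0$, so I would pivot to the real part. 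Writing $m(z) = u + iv$ with $v > 0$ and $z = E + i\eta$, the imaginary part of the defining equation reads $2uv + Ev + \eta u = 0$, whence $u = -E v/(2v + \eta)$ and consequently
\[
\Re(z + \alpha m(z)) \;=\; E + \alpha u \;=\; E \cdot \frac{(2-\alpha) v + \eta}{2 v + \eta}.
\]
The rational function $v \mapsto ((2-\alpha)v + \eta)/(2v + \eta)$ is monotonically non-increasing on $[0, \infty)$ (its derivative equals $-\alpha \eta/(2v+\eta)^2$), with infimum $(2-\alpha)/2 \geq \kappa/2$. Combined with $\abs{E} \geq \kappa$ this yields $\abs{z + \alpha m(z)} \geq \abs{\Re(z+\alpha m(z))} \geq \kappa^2/2$, completing the proof of \eqref{eq:upper_bound_m_alpha}.

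The only genuine obstacle is the asymmetry between the two regimes: in the second one, the natural imaginary-part argument degenerates precisely when both $\alpha$ and $\Im z$ are small, and one must instead leverage the hypothesis $\abs{E} \geq \kappa$. The key observation making this work is that the defining equation for $m$ factorizes $E + \alpha u$ in a way that exposes the favourable factor $2 - \alpha$, which is exactly the safety margin from the threshold $\alpha = 2$ at which $m_\alpha$ develops a singularity outside $[-2, 2]$.
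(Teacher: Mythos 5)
Your proposal is correct, and the skeleton is the same as the paper's: derive the identity $m_\alpha - m = (\alpha - 1)\,m^2\,m_\alpha$ from the definition of $m_\alpha$ and the self-consistent equation $m = -1/(z+m)$, so that together with $\abs{m}\leq 1$ both estimates reduce to a lower bound on $\abs{z + \alpha m}$; and in the first regime ($\alpha \geq \kappa$, $z \in \mathbf{S}_{\mathrm{l}}$) you use $\Im m \gtrsim 1$ exactly as the paper does.

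Where you genuinely diverge is in the second regime $\alpha \leq 2-\kappa$, $z \in \mathbf{S}_{\mathrm{u}}$. The paper handles it by a three-way case split on $(\alpha, \Re z)$: when $\abs{\Re z} \geq 2 - \kappa/2$, it uses $\abs{m}\leq 1$ directly to get $\abs{z + \alpha m}\geq \kappa/2$; when $\kappa \leq \abs{\Re z} \leq 2-\kappa/2$ and $\alpha \geq \kappa/2$, it falls back on the $\mathbf{S}_{\mathrm{l}}$ argument already established; and when $\alpha \leq \kappa/2$ it again uses $\abs{m}\leq 1$ and $\abs{z}\geq\kappa$. You instead give a single unified argument: solving the imaginary part of $m^2 + zm + 1 = 0$ for $u = \Re m$ gives $\Re(z + \alpha m) = E\,\frac{(2-\alpha)v + \eta}{2v+\eta}$, and the fraction is monotone in $v$ with infimum $(2-\alpha)/2 \geq \kappa/2$, so $\abs{\Re(z+\alpha m)}\geq \kappa^2/2$ uniformly. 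Your version is tighter and more self-contained (no case split, no appeal to the bulk regime), at the modest cost of an explicit computation from the self-consistent equation; it also makes transparent, as you note, why the threshold $\alpha = 2$ is what it is. The paper's version uses only the two black-box facts $\abs{m}\leq 1$ and $\Im m \gtrsim 1$ on the bulk domain and never touches the explicit form of $m$. Either is acceptable; yours is arguably the cleaner proof of this particular lemma.
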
 

We defer the proof of Lemma~\ref{lem:m_alpha} to the end of this subsection and 
first combine it with Theorem~\ref{thm:local_law} to prove Proposition~\ref{pro:delocalization_general}.

\begin{proof}[Proof of Proposition~\ref{pro:delocalization_general}] 
Following \cite[proof of Theorem~1.8]{ADK20} as well as replacing \cite[Theorem~4.2]{ADK20} by 
 Theorem~\ref{thm:local_law} and \cite[eq.~(A.4)]{ADK20} by \eqref{eq:upper_bound_m_alpha} 
yield Proposition~\ref{pro:delocalization_general}.  
\end{proof}

For the following proof, we note that $m(z)$ is the unique solution of the self-consistent equation
\begin{equation} \label{eq:self_consistent_m} 
m(z) = - \frac{1}{z + m(z)}
\end{equation}
with $\Im m(z) >0$ for $\Im z>0$.

\begin{proof}[Proof of Lemma~\ref{lem:m_alpha}]  
The definition of $m_\alpha$ in \eqref{eq:def_m_alpha} together with 
\eqref{eq:self_consistent_m} implies
\[ m_\alpha - m = m^2 m_\alpha ( \alpha - 1). \] 
Therefore, to show Lemma~\ref{lem:m_alpha}, it suffices to prove the boundedness of $m_\alpha$, 
i.e.\ \eqref{eq:upper_bound_m_alpha}, in the respective domains. 
We will use some standard properties of $m$ in the following. Their proofs can e.g.\ be found in 
\cite[Lemma~3.3]{BenyachKnowles2017}. 

If $\alpha \geq \kappa$ then the boundedness of $m_\alpha(z)$ for $z \in \mathbf{S}_{\mathrm{l}}$ follows from \eqref{eq:def_m_alpha} and  \eqref{eq:im_m_bounded_from_below}.
We now assume $\alpha \leq 2 - \kappa$ and $z \in \mathbf{S}_{\mathrm{u}}$. 
If $\alpha \leq 2 - \kappa$ and $\abs{\Re z}\geq 2 - \kappa/2$ then 
$\abs{z + \alpha m} \geq \kappa/2$ by \eqref{eq:m_leq_1} 
and, hence, the boundedness follows from \eqref{eq:def_m_alpha}. 
If $\alpha \geq \kappa/2$ and $\kappa \leq \abs{\Re z} \leq 2 - \kappa/2$ then the boundedness has been established before. 
Finally, if $\alpha \leq \kappa/2$ then $\abs{ z + \alpha m} \geq \kappa/2$ 
due to \eqref{eq:m_leq_1} and $\abs{z} \geq \kappa$ for $z \in \mathbf{S}_{\mathrm{u}}$.
\end{proof}

\subsection{Proof of local law -- Theorem~\ref{thm:local_law}} \label{sec:proof_local_law} 

For the proof of Theorem~\ref{thm:local_law}, we call a vertex $x \in [N]$ \emph{typical} \cite{ADK20} if 
\begin{equation} \label{eq:def_typical} 
 \absbb{\sum_{y \neq x} \bigg( \abs{H_{xy}}^2 - \frac{1}{N} \bigg)} \leq (\log N)^{-1/3} 
\quad \text{ and } \qquad \absbb{\sum_{y \neq x} \bigg( \abs{H_{xy}}^2 - \frac{1}{N} \bigg)G_{yy}^{(x)} } \leq (\log N)^{-1/3}, 
\end{equation} 
where $G_{yy}^{(x)} \deq \big( ( (M_{ab})_{a,b \in [N] \setminus \{x \}} - z )^{-1}\big)_{yy}$. 
We denote the set of typical vertices by $\cal T$. 
Note that $\cal T$ depends on the spectral parameter $z$.

Furthermore, we introduce the control parameter $\Lambda$, the indicator function $\phi_t$ with $t > 0$ 
defined through 
\begin{equation} \label{eq:def_Lambda_phi_t} 
\Lambda(z) \deq \max_{x,y \in [N]} \abs{G_{xy}(z) - \delta_{xy} m_{\beta_x}(z)}, \qquad \quad \phi_t(z) \deq \mathbbm{1}_{\Lambda(z) \leq (\log N)^{-1/t}}, 
\end{equation}  
and the average of the diagonal resolvent entries on the typical vertices 
\begin{equation} \label{eq:def_s} 
s(z) \deq \abs{\cal T}^{-1} \sum_{x \in \cal T} G_{xx}(z). 
\end{equation} 

\begin{lemma}[Approximate self-consistent equation] \label{lem:self_consistent} 
Let $\# \in \{ \mathrm{l}, \mathrm{u} \}$. 
Then for each constant $t >0$, we have 
\[ 
(\psi_\# \phi_t (z) + \mathbbm{1}_{\Im z = 1})  (1 + z G_{xx}(z) + s(z)G_{xx}(z) ) = O( (\log N)^{-1/3})
\] 
for $x \in \cal T$ and $z \in \mathbf{S}_{\#}$
with very high probability. 
\end{lemma}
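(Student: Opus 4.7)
The plan is to apply Schur's complement to $G_{xx}$ and derive an approximate self-consistent equation: the typicality of $x$ handles the diagonal part of the random quadratic form, standard large-deviation bounds handle the off-diagonal and linear fluctuations, and crude estimates---which close thanks to the a priori input from $\psi_\#\phi_t$ or $\Im z = 1$---handle the small rank-one contribution coming from $f\mathbf e\mathbf e^*$.

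Writing $\mathbf m_x = (M_{xy})_{y\neq x}$, Schur's identity gives $G_{xx}^{-1} = M_{xx} - z - \mathbf m_x^* G^{(x)}\mathbf m_x$, so that
\[
1 + zG_{xx} + s(z) G_{xx} = \bigl(M_{xx} + s(z) - \mathbf m_x^* G^{(x)}\mathbf m_x\bigr) G_{xx}.
\]
On $\psi_\#\phi_t$, the bound $|G_{xx} - m_{\beta_x}| \leq (\log N)^{-1/t}$ combined with Lemma~\ref{lem:m_alpha} gives $|G_{xx}|\lesssim 1$; at $\Im z = 1$ this follows from $\|G\|\leq 1$. Since also $|M_{xx}| \leq \kappa^{-1}d^{-1/2}+f/N = o(1)$ by \ref{item:assum_A3} and $f \leq N^{\kappa/6}$, the lemma reduces to showing
\[
\mathbf m_x^* G^{(x)}\mathbf m_x = s(z) + O((\log N)^{-1/3})
\]
with very high probability.

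Decomposing $\mathbf m_x = \mathbf h_x + \tfrac{f}{\sqrt N}\mathbf e^{(x)}$ with $\mathbf h_x = (H_{xy})_{y\neq x}$ yields four pieces. The diagonal quadratic $\sum_y |H_{xy}|^2 G^{(x)}_{yy}$ equals $\tfrac{1}{N}\sum_{y\neq x} G^{(x)}_{yy} + O((\log N)^{-1/3})$ by the typicality condition \eqref{eq:def_typical}. The off-diagonal quadratic $\sum_{y\neq y'}H_{xy}^* G^{(x)}_{yy'}H_{xy'}$ is bounded by a Hanson--Wright-type large-deviation inequality conditional on $G^{(x)}$ (using independence of $\mathbf h_x$ from $G^{(x)}$, the boundedness \ref{item:assum_A3}, and $\mathbb E|H_{xy}|^2\lesssim 1/N$); the Ward identity $\|G^{(x)}\|_{HS}^2 = \Im\,\mathrm{Tr}\,G^{(x)}/\Im z$ together with $\Im\,\mathrm{Tr}\,G^{(x)} \lesssim N$, valid on $\psi_\#\phi_t$ or at $\Im z = 1$, provides the required $\|G^{(x)}\|_{HS}^2\lesssim N/\Im z$. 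The linear terms $\tfrac{f}{\sqrt N}\mathbf h_x^* G^{(x)}\mathbf e^{(x)}$ are handled by Bernstein conditional on $G^{(x)}$, with variance $\lesssim f^2/(N^2\Im z^2)$. The pure $\mathbf e$-term $\tfrac{f^2}{N}\mathbf e^{(x)*}G^{(x)}\mathbf e^{(x)}$ is bounded by $f^2/(N\Im z) \leq N^{-2\kappa/3}$. All three non-diagonal contributions are polynomially small in $N$ under $f\leq N^{\kappa/6}$ and $\Im z\geq N^{-1+\kappa}$. Finally, the resolvent identity $G^{(x)}_{yy} = G_{yy} - G_{xy}G_{yx}/G_{xx}$ and the Ward bound $\sum_y|G_{xy}|^2 = \Im G_{xx}/\Im z$ reduce $\tfrac{1}{N}\sum_y G^{(x)}_{yy}$ to $\tfrac{1}{N}\mathrm{Tr}\,G + O((N\Im z)^{-1})$; a union-bound application of Bernstein and Hanson--Wright to the two sums in \eqref{eq:def_typical} gives $|\mathcal T^c| = o(N)$ with very high probability, hence $\tfrac{1}{N}\mathrm{Tr}\,G = s(z) + o(1)$ using the boundedness of $|G_{yy}|$ on $\psi_\#\phi_t$.

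The main obstacle is the off-diagonal quadratic estimate: reaching the target polylogarithmic accuracy $(\log N)^{-1/3}$ in the sparse regime $d\asymp\log N$ is only possible because the a priori input from $\psi_\#\phi_t$ (or $\Im z = 1$) upgrades the trivial operator-norm bound on $G^{(x)}$ to the much sharper Hilbert--Schmidt bound via the Ward identity. This structural role of the a priori input is the reason the lemma must be formulated conditionally on $\phi_t$; the same a priori control is also invoked in the linear-term and trace-replacement steps.
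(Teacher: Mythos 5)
Your plan reproduces, essentially from scratch, the Schur-complement/typical-vertex argument that the paper obtains by citing \cite[Lemma~4.16, eqs.~(4.38a), (4.38c), Prop.~4.8(i)]{ADK20}: Schur's identity, decomposition of the quadratic form, typicality handling the diagonal, a large-deviation estimate for the off-diagonal, a conditional bound for the rank-one perturbation, and a bound on $|\mathcal{T}^c|$ to match $\tfrac1N\sum_y G^{(x)}_{yy}$ to $s$. The skeleton and the key structural point---that the a priori input $\psi_\#\phi_t$ (or $\Im z=1$) is what makes everything close---are correct. Two of your technical claims, however, are off in a way that matters in this sparse regime.

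First, the role you assign to the Ward identity in the off-diagonal quadratic is misplaced. Writing the standard large-deviation bound for $\sum_{y\neq y'} H^*_{xy} G^{(x)}_{yy'} H_{xy'}$, there are three contributions, roughly of sizes $\tfrac{1}{N}\|G^{(x)}\|_{HS}$, $\tfrac{K}{\sqrt{N}}\max_i\bigl(\sum_j|G^{(x)}_{ij}|^2\bigr)^{1/2}$, and $K^2\max_{i\neq j}|G^{(x)}_{ij}|$, with $K\asymp d^{-1/2}$ the entry bound from \ref{item:assum_A3}. The Ward/Hilbert--Schmidt estimate controls the first two, which are $O(N^{-c})$ for $\Im z\geq N^{-1+\kappa}$; but when $d\asymp\log N$ the \emph{dominant} error is the third, $K^2\Gamma\asymp \Gamma/d\asymp(\log N)^{-1}$, which needs the entry-wise a priori bound $\max_{y,y'}|G^{(x)}_{yy'}|\leq\Gamma$ (the event $\theta$ in \cite{ADK20}, implied by $\psi_\#\phi_t$), not $\|G^{(x)}\|_{HS}$. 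So the a priori input is used in a different way than you say: the polylog target is not delivered by the Ward identity, and in fact the $(\log N)^{-1/3}$ accuracy of the lemma ultimately comes from the typicality threshold itself rather than from the off-diagonal quadratic.

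Second, the claim that ``a union-bound application of Bernstein and Hanson--Wright to the two sums in \eqref{eq:def_typical} gives $|\mathcal{T}^c|=o(N)$ with very high probability'' does not hold. For $d\asymp\log N$ the probability that a fixed vertex violates \eqref{eq:def_typical} is only $\exp(-c(\log N)^{2/3})$ or worse, which is far from $N^{-1}$, so multiplying by $N$ does not help: one cannot conclude $\mathcal{T}^c=\emptyset$, and a union bound gives no bound on $|\mathcal{T}^c|$ at all. The needed estimate is $\theta|\mathcal{T}^c|\lesssim N\exp(-c(\log N)^{1/3})$, proved in \cite[Prop.~4.8(i)]{ADK20} via a moment bound on the number of atypical vertices; note also that the second condition in \eqref{eq:def_typical} involves $G^{(x)}_{yy}$, so controlling it requires the a priori boundedness $\theta$---another place your sketch leans on the a priori input without saying so explicitly.
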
 

\begin{proof} 
This follows directly from some results of \cite{ADK20}. 
We recall the definition $\theta\deq\ind{\max_{x,y} \abs{G_{xy}} \leq \Gamma}$ from \cite[eq.~(4.19)]{ADK20}. 
Note that $\psi_\# \phi_t \leq \theta$ on $\mathbf{S}_{\#}$ by \eqref{eq:upper_bound_m_alpha} if $\Gamma$ is chosen large enough. 
Moreover, $\ind{\Im z = 1} \leq 
\theta$ as $\abs{G_{xy}} \leq (\Im z)^{-1}$ trivially and $\Gamma \geq 1$. 
By \cite[Proposition~4.8~(i)]{ADK20} with $\varphi_\mathfrak{a} \asymp (\log N)^{-1/3}$ and 
$d \asymp \log N$, we deduce the estimate
\begin{equation} \label{eq:Tc_estimate}
\theta \abs{\cal T^c} \lesssim N \exp(-c (\log N)^{1/3}) \lesssim N (\log N)^{-1/3}
\end{equation}
with very high probability.
Using these observations, from \cite[Lemma~4.16]{ADK20}, \cite[eq.~(4.38a)]{ADK20}, the definition 
of a typical vertex in \eqref{eq:def_typical}, \cite[eq.~(4.38c)]{ADK20}, and \eqref{eq:Tc_estimate},
we obtain Lemma~\ref{lem:self_consistent} 
(see also \cite[eq.~(4.44)]{ADK20} with $d \asymp \log N$ by \eqref{eq:d_regime} and $\varphi_{\mathfrak a} \asymp (\log N)^{-1/3}$).  
\end{proof} 

Averaging over $x \in \cal T$ in Lemma~\ref{lem:self_consistent} shows 
that $s(z)$ satisfies an approximate version of the self-consistent equation \eqref{eq:self_consistent_m} for $m(z)$ defined in \eqref{eq:def_m_alpha}. The self-consistent equation \eqref{eq:self_consistent_m} has another solution, denoted by $\wt m(z)$, whose imaginary part is negative. It is given by
\begin{equation}
\label{eq:def_m_tilde} 
\wt{m}(z) \deq \frac{-z - \sqrt{z^2 - 4}}{2}.
\end{equation}

\begin{lemma}[Initial bound and bootstrapping steps]  \label{lem:bootstrapping} 
Let $\# \in \{ \mathrm{l}, \mathrm{u} \}$ and $z \in \mathbf{S}_{\#}$. Then the following holds. 
\begin{enumerate}[label=(\roman*)] 
\item \label{item:bound_Lambda_Im_z_equals_1} 
If $\Im z = 1$ then, with very high probability, 
\[ \Lambda(z) \lesssim (\log N)^{-1/6}. \] 
\item \label{item:m_minus_wt_m_large} If $\abs{m(z) - \wt{m}(z)} > 2 (\log N)^{-1/7}$ then, with very high probability, 
\[ \psi_\# \phi_7(z)  \Lambda(z) \lesssim (\log N)^{-1/6}. \]  
\item  \label{item:m_minus_wt_m_small} If $\abs{m(z) - \wt{m}(z)} \leq 2 (\log N)^{-1/7}$ then, with very high probability, 
\[ \psi_\# \phi_8(z) \Lambda(z) \lesssim (\log N)^{-1/7}.   
\] 
\end{enumerate}
\end{lemma}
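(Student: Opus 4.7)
My plan is to pass from Lemma~\ref{lem:self_consistent} to a scalar self-consistent equation for $s(z)$ and then exploit its quadratic stability. Averaging the identity of Lemma~\ref{lem:self_consistent} over $x \in \cal T$, and using that $|\cal T^c| \lesssim N(\log N)^{-1/3}$ with very high probability (see \eqref{eq:Tc_estimate}), I obtain
\[
1 + z\, s(z) + s(z)^2 = O\bigl((\log N)^{-1/3}\bigr)
\]
on the event the relevant indicator is $1$. Using $m + \wt m = -z$ and $m \wt m = 1$, this factors as
\[
\bigl(s(z) - m(z)\bigr)\bigl(s(z) - \wt m(z)\bigr) = O\bigl((\log N)^{-1/3}\bigr),
\]
and the three parts of the lemma correspond to three ways of solving this for $s - m$. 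Part \ref{item:bound_Lambda_Im_z_equals_1} needs no a priori bound: at $\Im z = 1$ the trivial estimate $|G_{xy}| \leq 1$ controls $s$, and $\Im s > 0 > \Im \wt m$ combined with $|m - \wt m| \gtrsim 1$ give $|s - \wt m| \gtrsim 1$, hence $|s - m| \lesssim (\log N)^{-1/3}$. Reinserting into Lemma~\ref{lem:self_consistent} and using $\beta_x = 1 + O((\log N)^{-1/3})$ for typical $x$ together with \eqref{eq:continuity_m_alpha_in_alpha} yields $|G_{xx} - m_{\beta_x}(z)| \lesssim (\log N)^{-1/3}$ on $\cal T$; atypical $x$ and off-diagonal entries are dispatched by direct Schur complement bounds together with $|G| \leq 1$, and $\Lambda(z) \lesssim (\log N)^{-1/6}$ follows.

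For part \ref{item:m_minus_wt_m_large}, the a priori bound $\phi_7$ combined with $m_{\beta_x} \approx m$ for typical $x$ gives $|s - m| \lesssim (\log N)^{-1/7}$ after averaging. The hypothesis $|m - \wt m| > 2(\log N)^{-1/7}$ then forces $|s - \wt m| \gtrsim (\log N)^{-1/7}$ by the reverse triangle inequality, and dividing the factored identity by $|s - \wt m|$ produces the improved $|s - m| \lesssim (\log N)^{-1/3}/(\log N)^{-1/7} = (\log N)^{-4/21} \leq (\log N)^{-1/6}$, which I transfer to $\Lambda$ as in part \ref{item:bound_Lambda_Im_z_equals_1}. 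For part \ref{item:m_minus_wt_m_small} this dichotomy is lost because $|m - \wt m|$ itself is $\leq 2(\log N)^{-1/7}$; instead I apply the elementary inequality
\[
|s - m|^2 \leq |(s - m)(s - \wt m)| + |s - m|\,|\wt m - m|
\]
(obtained from $s - m = (s - \wt m) + (\wt m - m)$), whose solution as a quadratic in $|s - m|$ yields $|s - m| \leq |m - \wt m| + \sqrt{|(s-m)(s - \wt m)|} \lesssim (\log N)^{-1/7}$. The weaker a priori $\phi_8$ is still sufficient here, since its only role is to make Lemma~\ref{lem:self_consistent} available.

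The main obstacle, in all three parts, is the passage from the averaged bound on $s(z)$ to an entrywise estimate $|G_{xx} - m_{\beta_x}(z)|$ holding uniformly in $x$, including atypical vertices where $\beta_x$ need not be close to $1$. Lemma~\ref{lem:self_consistent} is stated only on $\cal T$, so for atypical $x$ one must return directly to the Schur complement identity for $G_{xx}^{-1}$ and show, conditionally on the value of $\beta_x$, that the quadratic form $\sum_{y, y' \neq x} M_{xy} G_{yy'}^{(x)} M_{y'x}$ concentrates around $\beta_x s(z)$ rather than $s(z)$; the additional factor $\beta_x$ is precisely what converts $G_{xx}^{-1} \approx -(z + \beta_x m(z))$ into $G_{xx} \approx m_{\beta_x}(z)$. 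This is where the conditioning events $\psi_{\mathrm l}, \psi_{\mathrm u}$ enter together with Lemma~\ref{lem:m_alpha}, which guarantee that $z + \beta_x m(z)$ stays bounded away from $0$ on the corresponding spectral domain and can therefore be safely inverted. Off-diagonal entries are handled by standard resolvent identities, and I expect the bookkeeping of error terms to follow \cite[Section~4]{ADK20} closely.
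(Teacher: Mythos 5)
Your proposal is correct and follows essentially the same route as the paper: average Lemma~\ref{lem:self_consistent} over $\cal T$ to obtain the scalar approximate self-consistent equation, exploit its quadratic structure to control $s-m$, transfer the scalar bound to $G_{xx}-m_{\beta_x}$ on $\cal T$ via \eqref{eq:continuity_m_alpha_in_alpha}, and handle atypical $x$ through the Schur-complement expansion together with the invertibility of $z+\beta_x m$ that $\psi_\#$ and Lemma~\ref{lem:m_alpha} guarantee. The small differences are cosmetic. Where you factor $1+zs+s^2=(s-m)(s-\widetilde m)$ and manipulate the quadratic by hand (including the clean inequality $|s-m|\leq|m-\widetilde m|+\sqrt{|(s-m)(s-\widetilde m)|}$ in part~\ref{item:m_minus_wt_m_small}), the paper instead invokes a packaged stability lemma from \cite{HeKnowlesMarcozzi2018} to get $\min\{|s-m|,|s-\widetilde m|\}\lesssim(\log N)^{-1/6}$ and then deduces the case distinction; both give the same conclusions, and your bound in part~\ref{item:m_minus_wt_m_large} is in fact marginally sharper (exponent $4/21>1/6$) though this buys nothing since $(\log N)^{-1/6}$ is what is needed. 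You correctly identify the real work — converting the averaged bound on $s$ into an entrywise bound, with the atypical-vertex expansion $G_{xx}-m_{\beta_x}=-m_{\beta_x}\varepsilon_x/(-z-\beta_x m+\varepsilon_x)$ and the role of $\psi_{\mathrm l},\psi_{\mathrm u}$ in keeping the denominator bounded away from zero — and your deference to \cite[Section~4]{ADK20} for that bookkeeping matches the paper, which imports precisely the expansion of \cite[eq.~(4.48)]{ADK20} and the off-diagonal bound \cite[eq.~(4.38b)]{ADK20}. The only thing you could make slightly more explicit is that part~\ref{item:bound_Lambda_Im_z_equals_1} requires no $\psi_\#$: at $\Im z=1$ the bound $\Im m_{\beta_x}\gtrsim 1$ and $|m_{\beta_x}|\leq 1$ hold unconditionally, which is what lets the denominator in the atypical-vertex expansion be inverted without any restriction on $\beta_x$, whereas in parts~\ref{item:m_minus_wt_m_large} and~\ref{item:m_minus_wt_m_small} the two cases $\#=\mathrm l$ and $\#=\mathrm u$ must be treated separately (using $\beta_x\geq\kappa$ with $\Im m\gtrsim 1$, respectively $\beta_x\leq 2-\kappa$ with $|m_{\beta_x}|\lesssim 1$).
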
 

\begin{proof} 
We first note that throughout the following arguments it suffices to consider the diagonal terms, 
$G_{xx} - m_{\beta_x}$,  
in the definition of $\Lambda$, since $\theta \max_{x\neq y} \abs{G_{xy}} \lesssim (\log N)^{-1/2}$ 
with very high probability by \cite[eq.~(4.38b)]{ADK20}. 
We refer to the proof of Lemma~\ref{lem:self_consistent} for the definition of $\theta$ and 
the proof that $\theta = 1$ in all cases considered in Lemma~\ref{lem:bootstrapping}. 

Let $t >0$ be a constant. 
By averaging over $x \in\cal T$ in Lemma~\ref{lem:self_consistent}, we conclude that 
\[ (\psi_\# \phi_t + \ind{\Im z =1}) (1 + z s + s^2) = O\big((\log N)^{-1/3}\big) \] 
for $z \in \mathbf{S}_\#$ with very high probability. 
Therefore, standard stability estimates for \eqref{eq:self_consistent_m} (e.g.\ \cite[Lemma~4.4]{HeKnowlesMarcozzi2018}) yield 
\begin{equation} \label{eq:stability_conclusion} 
 (\psi_\# \phi_t  + \ind{\Im z =1})\min \{ \abs{s-m},\abs{s-\wt{m}} \} \lesssim (\log N)^{-1/6} 
\end{equation}
for $z \in \mathbf{S}_{\#}$ with very high probability. 

If $\Im z = 1$ then we conclude from \eqref{eq:stability_conclusion} that $\abs{m - s} \lesssim (\log N)^{-1/6}$ 
since $\Im s >0$, $\Im m>0$, $\Im \wt{m} <0$ and $\abs{m - \wt{m}} \gtrsim 1$. 
Together with Lemma~\ref{lem:self_consistent}, for $x \in \cal T$, this implies 
\[ 
G_{xx} =  - \frac{ 1 + O( (\log N)^{-1/3})}{z + s } = m  + O ( (\log N)^{-1/6})  
\] 
if $\Im z =1$ due to \eqref{eq:self_consistent_m} and \eqref{eq:m_leq_1}.
Thus, 
$\abs{G_{xx} - m_{\beta_x}} \leq \abs{G_{xx} - m} + \abs{m-m_{\beta_x}} = O((\log N)^{-1/6})$ by \eqref{eq:continuity_m_alpha_in_alpha}. 
This proves 
\begin{equation} \label{eq:aux_bound_typical_vertices} 
\max_{x \in \cal T} \abs{G_{xx} - m_{\beta_x}} \lesssim (\log N)^{-1/6}.  
\end{equation}

Next, let $x \notin \cal T$. 
Following \cite[proof of eq.~(4.48)]{ADK20} and using \eqref{eq:aux_bound_typical_vertices} instead 
of \cite[eq.~(4.46)]{ADK20}, we obtain  
\begin{equation} \label{eq:expansion_atypical_vertex} 
 G_{xx} - m_{\beta_x} = - m_{\beta_x} \frac{1}{-z - \beta_x m + \eps_x} \eps_x 
\end{equation}
with very high probability if $\Im z =1$, where $\eps_x = O((1 + \beta_x) (\log N)^{-1/6})$. 
Since $m_{\beta_x}$ is the Stieltjes transform of a probability measure on $\R$, we have 
$\abs{m_{\beta_x}} \leq (\Im z)^{-1}$ and $\Im m_{\beta_x} \gtrsim 1$ if $\Im z = 1$. 
Hence, \eqref{eq:expansion_atypical_vertex} implies 
$\abs{G_{xx}- m_{\beta_x}} \lesssim (\log N)^{-1/6}$ with very high probability. 
This proves \ref{item:bound_Lambda_Im_z_equals_1}. 

If $\abs{m - \wt{m}} > 2 (\log N)^{-1/7}$ then we conclude from 
\eqref{eq:continuity_m_alpha_in_alpha},  \eqref{eq:def_typical}, \eqref{eq:stability_conclusion} 
and the definitions of $\phi_7$ and $s$ from \eqref{eq:def_Lambda_phi_t} and \eqref{eq:def_s}, respectively,  
that $\psi_\# \phi_7 \abs{s - m} \lesssim (\log N)^{-1/6}$. 
Then we follow the arguments in the proof of \ref{item:bound_Lambda_Im_z_equals_1} 
and obtain that \eqref{eq:aux_bound_typical_vertices} and \eqref{eq:expansion_atypical_vertex} 
hold with very high probability on the event $\{\psi_\# \phi_7 = 1\}$. 

We now estimate the right-hand side of \eqref{eq:expansion_atypical_vertex}. 
From \eqref{eq:im_m_bounded_from_below} and $\phi_7 \psi_\mathrm{l} \eps_x =  
O( (1 + \beta_x) (\log N)^{-1/6})$, we conclude that $\abs{- z - \beta_x m + \eps_x} 
\geq \abs{\Im z + \beta_x \Im m - \Im \eps_x} \gtrsim \beta_x$. 
Thus, using $\abs{\eps_x} \lesssim \beta_x (\log N)^{-1/6}$ by $\beta_x \geq \kappa$ 
and \eqref{eq:upper_bound_m_alpha} in 
\eqref{eq:expansion_atypical_vertex} completes the proof of \ref{item:m_minus_wt_m_large} 
if $\# = \mathrm{l}$. 
If $\# = \mathrm{u}$ then $\beta_x \leq 2 - \kappa$ implies 
 $\abs{\eps_x} \lesssim (\log N)^{-1/6}$. 
Hence, $\abs{-z - \beta_x m + \eps_x} = \abs{m_{\beta_x}^{-1} + \eps_x} \gtrsim 1$ 
due to \eqref{eq:def_m_alpha} and 
$\abs{m_{\beta_x}} \lesssim 1$ on $\mathbf{S}_{\mathrm{u}}$ by \eqref{eq:upper_bound_m_alpha}. 
Applying these estimates to the right-hand side of \eqref{eq:expansion_atypical_vertex} 
 completes the proof of \ref{item:m_minus_wt_m_large}. 

For the proof of \ref{item:m_minus_wt_m_small}, we note that \eqref{eq:stability_conclusion} 
and the condition $\abs{m-\wt{m}} \leq 2 ( \log N)^{-1/7}$ imply 
$\psi_\# \phi_8 \abs{s - m} \lesssim (\log N)^{-1/7}$. 
Hence, proceeding as in the proof of \ref{item:m_minus_wt_m_large} 
shows \ref{item:m_minus_wt_m_small}. 
\end{proof} 

We now establish Theorem~\ref{thm:local_law} by showing that $\phi_7 =1 =\phi_8$ for $\Im z =1$ 
and bootstrapping this information to small values of $\Im z$ using Lemma~\ref{lem:bootstrapping}. 

\begin{proof}[Proof of Theorem~\ref{thm:local_law}] 
Fix $z \in \mathbf{S}_{\#}$. Set $z_k \deq \Re z + \ii \max\{ (1 - N^{-3}k), \Im z\}$ for $k \in \N$ 
and $K \deq \min \{ k \in \N \colon \Im z_k = \Im z \}$. 
Note that $z_k \in \mathbf{S}_\#$ for all $k \in \N$. 
Since $\Im z \mapsto \abs{m(z) - \wt{m}(z)}$ is monotonically increasing (as can be seen by an explicit computation using \eqref{eq:def_m_alpha} and \eqref{eq:def_m_tilde}), there is a unique $K_* \in \N$ 
such that $0 \leq K_* \leq K$, $\abs{m(z_k) - \wt{m}(z_k)} > 2 (\log N)^{-1/7}$ for all $k \leq K_*$ and $\abs{m(z_k) - \wt{m}(z_k)} \leq 2 (\log N)^{-1/7} $ for all $k \in (K_*, K] \cap \N$. 
Note that $K_* >0$ while $K_* = K$ is possible.

Throughout the remainder of the argument, we work on the event $\{ \psi_\# = 1 \}$. 

We now show by induction that $\phi_7(z_k) = 1$ for all $k \leq K_*$ with very high probability. 
By Lemma~\ref{lem:bootstrapping} \ref{item:bound_Lambda_Im_z_equals_1}, we have $\Lambda(z_0) \lesssim (\log N)^{-1/6}$, i.e.\ $\phi_7(z_0) = 1$. 

We assume that $\phi_7(z_{k-1}) = 1$ for some $k \leq K_*$. 
The resolvent entries $G_{xy}$ and $m_{\beta_x}$ are 
 Lipschitz-continuous in $z$ with constant $N^2$ if $\Im z \geq N^{-1}$. 
Hence, owing to $\Im z_k \geq N^{-1}$ for all $k$, we have 
\begin{equation} \label{eq:Lambda_step} 
 \Lambda(z_k) \leq \Lambda(z_{k-1}) + 2 N^{-1}. 
\end{equation}
Since $\Lambda(z_{k-1}) \lesssim (\log N)^{-1/6}$ by Lemma~\ref{lem:bootstrapping} \ref{item:m_minus_wt_m_large}, we conclude from \eqref{eq:Lambda_step} that $\Lambda(z_k) \leq (\log N)^{-1/7}$, i.e.\ $\phi_7(z_k)= 1$. 

Therefore, $\phi_7(z_{K_*}) = 1$ and \eqref{eq:Lambda_step} imply $\Lambda(z_{K_* +1}) \leq (\log N)^{-1/8}$, i.e.\ $\phi_8(z_{K_*+1}) = 1$. 
Arguing as above with Lemma~\ref{lem:bootstrapping} \ref{item:m_minus_wt_m_large}
replaced by Lemma~\ref{lem:bootstrapping} \ref{item:m_minus_wt_m_small}, we obtain 
$\phi_8(z_k) = 1$ for all $k \leq K$. 
In particular, $\psi_\# \Lambda = \psi_\# \Lambda(z_K) \lesssim (\log N)^{-1/7}$ by 
Lemma~\ref{lem:bootstrapping} \ref{item:m_minus_wt_m_small}. 
This proves the first bound in Theorem~\ref{thm:local_law}. 

For the second bound, we note that 
$m_{\beta_x} = m + O( (\log N)^{-1/3})$ for $x \in \cal T$ by \eqref{eq:continuity_m_alpha_in_alpha} and 
\eqref{eq:def_typical} 
while 
$\abs{G_{xx} - m_{\beta_x}} + \abs{m_{\beta_x}} + \abs{m} \lesssim 1$ for $x \notin \cal T$ by \eqref{eq:m_leq_1}. 
Therefore, the second bound in Theorem~\ref{thm:local_law} follows 
from averaging the first bound in Theorem~\ref{thm:local_law} over $x \in [N]$, 
distinguishing the cases $x \in \cal T$ and $x \notin \cal T$ 
and using \eqref{eq:Tc_estimate}. 
\end{proof}

In the case $\# = \mathrm{l}$, Theorem~\ref{thm:local_law} can be proved by adjusting some arguments 
from \cite{ADK20} as we explain in the next remark. 

\begin{remark}[Alternative proof of Theorem~\ref{thm:local_law} for $\# = \mathrm{l}$] 
If $\# = \mathrm{l}$ then following \cite[proof of Proposition~4.18]{ADK20}
and replacing \cite[eq.~(A.4) and (A.5)]{ADK20} by \eqref{eq:upper_bound_m_alpha} and 
\eqref{eq:continuity_m_alpha_in_alpha}, respectively, yields 
\begin{equation} \label{eq:bootstrapping_o_alternative} 
 \mathbbm{1}_{\Lambda \leq \lambda} \psi_{\mathrm{l}} \Lambda \leq \mathcal C (\log N)^{-1/3}
\end{equation}
for all $z \in \mathbf{S}_{\mathrm{l}}$ with very high probability, where $\lambda \leq 1$ is an arbitrary constant. 
Then we obtain Theorem~\ref{thm:local_law} by following \cite[proof of Theorem 4.2]{ADK20}, 
choosing $\lambda$ appropriately
and using \eqref{eq:bootstrapping_o_alternative} instead of \cite[Proposition~4.18]{ADK20}. 
\end{remark}

\appendix 

\section{Basic estimates on $m$} 

The next lemma collects a few basic estimates of the Stieltjes transform $m$ defined in \eqref{eq:def_m_alpha}. 

\begin{lemma}
For each $z \in \C$ with $\Im z>0$, we have 
\begin{equation} \label{eq:m_leq_1} 
\abs{m(z)} \leq 1.  
\end{equation} 
For each $z \in \mathbf{S}_{\mathrm{l}}$, we have  
\begin{equation} \label{eq:im_m_bounded_from_below} 
\Im m(z) \gtrsim 1.  
\end{equation} 
\end{lemma}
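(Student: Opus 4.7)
The plan is to derive both bounds by elementary manipulation of the self-consistent equation satisfied by $m$, together with its interpretation as the Stieltjes transform of the semicircle law.

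For \eqref{eq:m_leq_1}, I would first rewrite \eqref{eq:self_consistent_m} as $m^2 + zm + 1 = 0$, noting that $m(z) \neq 0$ (otherwise the equation would read $1 = 0$). Multiplying by $\bar m$ and taking the imaginary part yields the identity
\[
\Im m(z) \, \big(1 - \abs{m(z)}^2\big) \;=\; \abs{m(z)}^2 \, \Im z \,.
\]
By the branch convention in \eqref{eq:def_m_alpha}, $\Im m(z) > 0$ for $\Im z > 0$, and $\abs{m(z)}^2 \Im z > 0$ since $m(z) \neq 0$. Hence $1 - \abs{m(z)}^2 > 0$, which gives $\abs{m(z)} < 1$ and in particular \eqref{eq:m_leq_1}.

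For \eqref{eq:im_m_bounded_from_below}, I would use the Poisson-kernel-style representation
\[
\Im m(z) \;=\; \int_{-2}^{2} \frac{\Im z \cdot \rho_{\mathrm{sc}}(t)}{(t - \Re z)^2 + (\Im z)^2} \, \dd t \,,
\]
where $\rho_{\mathrm{sc}}(t) = (2\pi)^{-1}\sqrt{4 - t^2}$ on $[-2,2]$. For $z = x + \ii y \in \mathbf{S}_{\mathrm{l}}$ we have $x \in [-2+\kappa, 2-\kappa]$ and $y \leq 1$, so the sub-interval $[x - \kappa/2,\, x + \kappa/2]$ lies in $[-2+\kappa/2,\, 2-\kappa/2]$, on which $\rho_{\mathrm{sc}}$ is bounded below by a positive constant $c(\kappa)$. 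Restricting the integral to this sub-interval and evaluating the elementary antiderivative gives $\Im m(z) \geq 2\, c(\kappa) \arctan\!\big(\kappa/(2y)\big)$.

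There is no real obstacle in either step; the only point worth a moment's thought is that the bound in \eqref{eq:im_m_bounded_from_below} must hold uniformly on $\mathbf{S}_{\mathrm{l}}$, including as $y$ approaches the lower end $N^{-1+\kappa}$. This is immediate from the monotonicity of $\arctan$, which makes the worst case $y = 1$: there one obtains $\Im m(z) \geq 2\, c(\kappa) \arctan(\kappa/2) \gtrsim 1$ with an implicit constant depending only on $\kappa$.
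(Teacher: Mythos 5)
Your proof is correct. For \eqref{eq:m_leq_1} you take essentially the same route as the paper: both manipulate the self-consistent equation $m^2+zm+1=0$ (the paper phrases it as inverting $m=-1/(z+m)$, you multiply by $\bar m$), take the imaginary part, and invoke $\Im m>0$ to conclude $|m|<1$; the algebra is equivalent. For \eqref{eq:im_m_bounded_from_below} the paper does not argue at all --- it just cites \cite[eq.~(3.3)]{BenyachKnowles2017} --- whereas you give a short self-contained proof via the Poisson-kernel representation of the Stieltjes transform, restricting the integral to the sub-interval $[\Re z-\kappa/2,\Re z+\kappa/2]$ where the semicircle density is bounded below, and observing that $\arctan\pb{\kappa/(2\Im z)}\geq\arctan(\kappa/2)$ since $\Im z\leq 1$ on $\mathbf{S}_{\mathrm{l}}$. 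Your version is a bit longer but has the advantage of making the appendix self-contained rather than deferring to an external reference; the constant it produces is also explicit in $\kappa$.
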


\begin{proof} 
The upper bound \eqref{eq:m_leq_1} follows from inverting \eqref{eq:self_consistent_m}, 
taking the imaginary part of the result and using that $\Im m(z)>0$. 
For the proof of \eqref{eq:im_m_bounded_from_below}, we refer to \cite[eq.~(3.3)]{BenyachKnowles2017}. 
\end{proof}

\medskip

\paragraph{Acknowledgements}
The authors acknowledge funding from the European Research Council (ERC) under the European Union’s Horizon 2020 research and innovation programme, grant agreement No.\ 715539\_RandMat and the Marie Sklodowska-Curie grant agreement No.\ 895698.  Funding from the Swiss National Science Foundation through the NCCR SwissMAP grant is also acknowledged. 
J.A.\ and A.K.\ acknowledge support from the National Science Foundation under Grant No.\ DMS-1928930 during their participation in the program ``Universality and Integrability in Random Matrix Theory and Interacting Particle Systems'' hosted by the Mathematical Sciences Research Institute in Berkeley, California during the Fall semester of 2021.

\providecommand{\bysame}{\leavevmode\hbox to3em{\hrulefill}\thinspace}
\providecommand{\MR}{\relax\ifhmode\unskip\space\fi MR }
\providecommand{\MRhref}[2]{%
  \href{http://www.ams.org/mathscinet-getitem?mr=#1}{#2}
}
\providecommand{\href}[2]{#2}

\noindent
Johannes Alt (\href{mailto:johannes.alt@unige.ch}{johannes.alt@unige.ch}) -- University of Geneva and New York University.
\\
Rapha\"el Ducatez (\href{mailto:raphael.ducatez@ens-lyon.fr}{raphael.ducatez@ens-lyon.fr}) -- ENS Lyon, Unité de Mathématiques Pures et Appliqués (UMPA).
\\
Antti Knowles (\href{mailto:antti.knowles@unige.ch}{antti.knowles@unige.ch}) -- University of Geneva.

\end{document}